\renewcommand{\leq}{\leqslant}
\renewcommand{\geq}{\geqslant}
\numberwithin{equation}{section}
\title{Diameter bounds for degenerating Calabi-Yau metrics}
\author{Yang Li}
\address{Massachusetts Institute of Technology, 77 Massachusetts Avenue, Cambridge, MA 02139}
\email{yangmit@mit.edu}
\author{Valentino Tosatti}
\address{Department of Mathematics and Statistics, McGill University, Montr\'eal, Qu\'ebec H3A 0B9, Canada}
\email{valentino.tosatti@mcgill.ca}
\date{\today}
\newtheorem{thm}{Theorem}[section]
\newtheorem{prop}[thm]{Proposition}
\theoremstyle{definition}
\newcommand{\ddbar}{i\partial\bar{\partial}}
\newcommand{\ov}[1]{\overline{#1}}
\newcommand{\ti}[1]{\tilde{#1}}
\newcommand{\ve}{\varepsilon}
\newcommand{\Sk}{\mathrm{Sk}}
\DeclareMathOperator{\Tr}{tr}
\def\XXint#1#2#3{{\setbox0=\hbox{$#1{#2#3}{\int}$ }
		\vcenter{\hbox{$#2#3$ }}\kern-.6\wd0}}
\begin{document}
\begin{abstract}We obtain sharp upper and lower bounds for the diameter of Ricci-flat K\"ahler metrics on polarized Calabi-Yau degeneration families, as conjectured by Kontsevich-Soibelman.
\end{abstract}	
	\maketitle

\section{Introduction}
The main objects of study in this note are Ricci-flat K\"ahler metrics on Calabi-Yau manifolds whose complex structure degenerates. More precisely, we assume that we have $\pi:X\to\Delta^*\subset\mathbb{C}$ a projective holomorphic submersion with connected fibers of relative dimension $n$ with $K_{X/\Delta^*}\cong\mathcal{O}_X$ and which is meromorphic at $0$ (in the sense of \cite{BJ}), meaning that it extends to a proper flat map $\pi:\mathfrak{X}\to\Delta$ with $\mathfrak{X}$ normal. We also fix a relative polarization $L\to X$, and we will refer to this data as a polarized Calabi-Yau degeneration family, often without mentioning $L$ explicitly. The fibers $X_t$ for $t\in\Delta^*$ are thus polarized Calabi-Yau $n$-folds.

A choice of $\mathfrak{X}$ as above will be called a model of $X$. Models are highly non-unique, and in particular up to passing to a finite base change, we may assume by \cite{KKMS} that $X$ admits a semistable model, where $\mathfrak{X}$ is smooth and $X_0=\sum_{i\in I}E_i$ is reduced and has simple normal crossings. One can then apply a relative MMP to a semistable model and obtain a relatively minimal dlt model \cite{Bi,HX,KNX,NX}, which is unique up to applying sequences of flops on the central fiber \cite{Bo,Ka}. Taking the dual intersection complex of the central fiber of any such minimal dlt model, one obtains a simplicial complex $\Sk(X)$, the essential skeleton of $X$, which was introduced in this context by Kontsevich-Soibelman \cite{KS} with a different but equivalent definition (cf. \cite{MN}), whose homeomorphism type is well-defined independent of any choice of models \cite{NX}.

In this note we will not make direct use of the skeleton $\Sk(X)$ itself, but only of its real dimension which will be denoted by $m$, and which appears naturally \cite{BJ,KS} as the power of logarithmic blowup of the fiberwise integrals of the Calabi-Yau volume forms, as we will recall in Section \ref{sectprel} below. As shown in \cite{NX} we always have $m\leq n$, and the case when $m=0$ happens if and only if (after possibly a finite base change) $X$ admits a semistable model with central fiber $X_0$ which is a Calabi-Yau variety with klt singularities. Furthermore, the case $m=n$ is equivalent to the monodromy transformation around $0$ acting on $H^n(X_t,\mathbb{C})$ having a Jordan block of size $n+1$. This is the familiar notion of a ``large complex structure limit'' from mirror symmetry, see e.g. \cite{Gr}, where these polarized Calabi-Yau degeneration families play a crucial role.

Our main interest is in the behavior as $t\to 0$ of the Ricci-flat K\"ahler metrics $\omega_t$ on $X_t$ in the scaled class $\frac{1}{|\log|t||}c_1(L)|_{X_t}$, whose existence is guaranteed by Yau's Theorem \cite{Ya}. In \cite[Conjecture 1]{KS} Kontsevich-Soibelman conjectured that if $X\to\Delta^*$ is a large complex structure limit of Calabi-Yau manifolds, then the diameter of $(X_t,\omega_t)$ is bounded away from zero and infinity (note that there is a typo in the statement of their conjecture), and furthermore they, and independently also Gross-Wilson \cite{GW} and Todorov, conjectured that the collapsed Gromov-Hausdorff limit of $(X_t,\omega_t)$ is a half-dimensional affine manifold with singularities in codimension $2$, which is homeomorphic to $\Sk(X)$, and which is expected to be the base of the Strominger-Yau-Zaslow fibration of $X_t$ \cite{SYZ}, see e.g. \cite[\S 7]{Br} and \cite{To2} for surveys of these and related topics.

The main theorem of this note is to prove the conjectured sharp diameter bound in \cite[Conjecture 1]{KS}, for all polarized Calabi-Yau degeneration families with $m>0$, thus also settling \cite[Conjecture 4.7]{To2}:
\begin{thm}\label{th1}
Let $\pi:X\to\Delta^*$ be a polarized Calabi-Yau degeneration family, suppose that the dimension $m$ of the essential skeleton $\Sk(X)$ is positive, and let $\omega_t$ be the Ricci-flat K\"ahler metric on $X_t$ in the class $\frac{1}{|\log|t||}c_1(L)|_{X_t}$, for $t\in\Delta^*.$ Then there is $C>0$ such that
$$C^{-1}\leq \mathrm{diam}(X_t,\omega_t)\leq C,$$
for all $t\in\Delta^*$ with $|t|$ sufficiently small.
\end{thm}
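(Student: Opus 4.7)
The proof naturally splits into upper and lower bounds, both relying on fixing a semistable model $\pi:\mathfrak{X}\to \Delta$ with central fiber $X_0 = \sum_{i\in I} E_i$. Let $\Omega$ be a nowhere-vanishing section of $K_{\mathfrak{X}/\Delta}$, and $\Omega_t := \Omega|_{X_t}$; by \cite{BJ}, $a(t) := \int_{X_t} i^{n^2}\Omega_t \wedge \bar\Omega_t \sim C|\log|t||^m$ as $t\to 0$.

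For the upper bound, fix a smooth K\"ahler form $\omega_0$ on $\mathfrak{X}$ representing $c_1(L)$, and write $\omega_t = \frac{1}{|\log|t||}\omega_0|_{X_t} + i\partial\bar\partial\phi_t$. The Ricci-flat condition becomes the Monge-Amp\`ere equation
\[
\omega_t^n = c_t\, i^{n^2}\Omega_t \wedge \bar\Omega_t,\qquad c_t \sim C'|\log|t||^{-n-m}.
\]
My plan is to apply a diameter estimate in the spirit of recent Guo-Phong-Song-Sturm results, which bound the diameter of a solution to such an equation in terms of an $L^1(\log L)^q$-type integral ($q>n$) of the density relative to a reference volume form. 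The shrinking K\"ahler class $[\omega_t]$ prevents a black-box application and requires adapting the argument to track the scaling; the key analytic input is a uniform integrability of suitable logarithmic expressions of $\Omega_t \wedge \bar\Omega_t$ against $\omega_0^n$ on each toroidal chart $\{z_0\cdots z_k = t\}$ near a stratum of $X_0$, which should follow from the explicit local form of $\Omega$ together with classical log-integrability on polydiscs.

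For the lower bound, note that a naive application of Bishop-Gromov only yields $\mathrm{diam}(X_t,\omega_t) \geq c/|\log|t||^{1/2}$, which tends to zero; to get a uniform positive lower bound one must exploit the hypothesis $m \geq 1$. My strategy is to work locally on $\mathfrak{X}$ near a generic point of a positive-dimensional stratum $E_i \cap E_j$, where there is a transverse toroidal model $\{z_0 z_1 = t\}$. Ricci-flat model metrics on such a local geometry (of Ooguri-Vafa or Stenzel type) naturally carry a ``neck'' of size $\sim |\log|t||^{1/2}$ in the fixed Calabi-Yau scale, equivalently of size $\sim 1$ in the rescaled class $\frac{1}{|\log|t||}c_1(L)$. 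Using the a priori estimates from the upper bound together with pluripotential-theoretic stability of the Monge-Amp\`ere equation, I would compare $\omega_t$ near this stratum to such a model and thereby exhibit two points of $X_t$ at mutual $\omega_t$-distance $\geq c > 0$.

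The main obstacle I foresee is the uniform a priori estimate on $\phi_t$, which must simultaneously handle the shrinking K\"ahler class and the logarithmic blow-up of $\Omega_t\wedge\bar\Omega_t$ at $X_0$. This estimate underpins both directions and must be uniform across the toroidal models associated to strata of different codimensions, making the analysis genuinely stratified rather than local at a single point, and requiring some care to patch local estimates into global ones.
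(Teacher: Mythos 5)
Your proposal does not follow the paper's route, and as written both halves have genuine gaps; the lower bound is the more serious one. For the lower bound you propose to compare $\omega_t$ near a codimension-two stratum to an explicit Ricci-flat model (Ooguri--Vafa/Stenzel) and locate a neck of unit size. But ``pluripotential-theoretic stability'' of the Monge--Amp\`ere equation only controls potentials (in $L^\infty$ or in capacity); it does not upgrade to a $C^2$ or metric comparison between $\omega_t$ and a model metric, and such metric asymptotics near the deep strata are precisely the content of the (largely open) metric SYZ-type conjectures, known only in special cases such as \cite{Li}. So the key step of your lower bound argument cannot be carried out by the tools you invoke. The paper avoids any pointwise metric comparison: it builds a toric-type reference metric $\omega_t'$ in the class $\frac{1}{|\log|t||}c_1(\mathfrak{L})|_{X_t}$ that dominates $C^{-1}|\log|t||^{-2}\sum_j i\,\frac{dz_j}{z_j}\wedge\frac{d\ov{z_j}}{\ov{z_j}}$ near a stratum $E_J$ with $|J|=m+1$ and all discrepancies $a_j=0$ (note that you must choose the stratum with vanishing $a_j$'s, which realizes the volume concentration --- a generic $E_i\cap E_j$ will not do), and a cut-off quadratic function $\rho_t$ of the logarithmic coordinates with $|d\rho_t|_{\omega_t'}\leq C$. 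Then $\int_{X_t}|d\rho_t|_{\omega_t}\,d\mu_t$ is bounded by Cauchy--Schwarz because $\int_{X_t}\Tr_{\omega_t}\omega_t'\,d\mu_t$ is a purely cohomological intersection number, and the Cheeger--Colding segment inequality \cite{CC} applied to the two sublevel sets of $\rho_t$ (each of which carries a definite fraction of $\mu_t$ by the polar-coordinate computation of \cite{BJ}) forces $\mathrm{diam}(X_t,\omega_t)\geq C^{-1}$. This is entirely soft --- only an integral gradient bound plus volume asymptotics --- and is the idea your proposal is missing.

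For the upper bound, your plan to invoke a Guo--Phong--Song--Sturm-type entropy-to-diameter estimate is a plausible alternative route (and such estimates do, with work, recover collapsing diameter bounds of this kind), but you have only stated the plan: the known theorems of that type are formulated for a fixed background class, and in the present setting the class shrinks like $|\log|t||^{-1}$, the total volume decays like $|\log|t||^{-n}$, and the density of $\omega_t^n$ against a fixed volume form blows up like $|\log|t||^{m}$ along $X_0$; whether the resulting diameter bound is uniform depends exactly on how the constants track these rates, which you acknowledge but do not carry out. The paper's argument is more elementary and self-contained: using the same $\omega_t'$ (plus a scaled Fubini--Study form) it exhibits a Euclidean rectangle $\ti{B}_t$ of definite $\mu_t$-measure on which the average $\omega_t$-distance between pairs of points is bounded, via a Fubini/change-of-variables trick as in \cite{DPS} together with the cohomological bound on $\int\Tr_{\ti{\omega}_t}\omega_t$; this produces a unit $\omega_t$-ball of definite volume fraction, and Yau's volume growth argument \cite{To0,RZ} then gives the diameter upper bound. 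If you want to pursue your route, the missing work is precisely the uniform-in-$t$ version of the entropy diameter estimate in the collapsing regime; as the proposal stands, neither half constitutes a proof.
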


The assumption that $m>0$ is necessary, since when $m=0$ we can find a semistable model $\mathfrak{X}$ with central fiber $X_0$ a Calabi-Yau variety with klt singularities, as mentioned above, and then it is known by work of Rong-Zhang \cite{RZ} that we have instead $\mathrm{diam}(X_t,\omega_t)\sim |\log|t||^{-\frac{1}{2}}$.

Despite several recent works addressing diameter bounds for K\"ahler-Einstein metrics under various assumptions, see e.g. \cite{FGS,Li,So}, the only previously known general results in the direction of our main theorem are the following. First, tracing through the arguments given in \cite[Theorem 2.1]{RZ} (see also \cite[Proposition 4.2]{Ta}) gives the upper bound
$$\mathrm{diam}(X_t,|\log|t||\omega_t)\leq C|\log|t||^{m},$$
which is worse than the one provided by Theorem \ref{th1}. And secondly, it follows from the earlier works \cite{Wa,To,Ta} that when $m>0$ we necessarily have $\mathrm{diam}(X_t,|\log|t||\omega_t)\to\infty$ (see also the exposition in \cite{Zh}), but the arguments there do not provide any explicit lower bound.

The rough idea of our proof is the following: as we will recall in Section \ref{sectprel}, a well-known computation in polar coordinates (cf. \cite{BJ}) reveals that most of the mass of the Calabi-Yau volume forms $\omega_t^n$ on $X_t$ is carried by ``very small'' regions which are near certain intersections of $m+1$ irreducible components of the central fiber. In Section \ref{sectlow} we then construct a K\"ahler metric $\omega'_t$ cohomologous to $\omega_t$ which behaves like a toric metric in this good region (in the directions $z_1,\dots,z_m$ normal to these components). Using $\omega'_t$ we then obtain a uniform $L^1$ bound for $|d\rho_t|_{\omega_t}$ where $\rho_t$ looks like a paraboloid in the logarithmic coordinates $x_1,\dots,x_m$ in our good region, and using Cheeger-Colding's segment inequality \cite{CC} we deduce the diameter lower bound. Lastly, in Section \ref{sectup} we again use $\omega'_t$ to produce a unit-size $\omega_t$-geodesic ball in $X_t$ whose volume is a definite fraction of the total, from which the diameter upper bound follows from an argument of Yau, as in \cite{To0,RZ}.

\medskip

\noindent
{\bf Acknowledgments. }
The first-named author is a 2020 Clay Research Fellow, and was based at the Institute for Advanced Study, supported by the Zurich Insurance Company Membership, when this article was written. He thanks Song Sun for earlier discussions. The second-named author would like to thank S.Takayama and Y.Zhang for earlier discussions on these topics, and N. McCleerey and T.-D. T\^{o} for email exchanges.  He was partially supported by NSF grant DMS-1903147, and this article was written during his visit at the Department of Mathematics and at the Center for Mathematical Sciences and Applications at Harvard University, which he would like to thank for the hospitality. We are also grateful to the referee for useful suggestions.

\section{Volume form asymptotics}\label{sectprel}
In this section we recall some background on the asymptotic behavior of the relative Calabi-Yau volume forms on a polarized Calabi-Yau degeneration family, and set up some notation for later use.

As in the introduction, we assume we have a polarized Calabi-Yau family $\pi:X\to\Delta^*$ with relative polarization $L$, and we fix a trivializing section $\Omega$ of $K_X$ and define trivializations $\Omega_t$ of $K_{X_t}$ by $\Omega=dt\wedge\Omega_t$ along $X_t$. Up to passing to a finite base change, we may assume that $X$ admits a semistable model, where $\mathfrak{X}$ is smooth and $X_0=\sum_{i\in I}E_i$ is reduced and has simple normal crossings. In this case we have
$$K_{\mathfrak{X}/\Delta}=\sum_{i\in I} a_i E_i,\quad a_i\in\mathbb{Z},$$
and letting $\kappa=\min_{i\in I} a_i$, up to replacing $\Omega_t$ by $t^{-\kappa}\Omega_t$, we may assume without loss that $\kappa=0$.

Recall that $\omega_t$ denotes the Calabi-Yau metric on $X_t$ in the class $\frac{1}{|\log |t||}c_1(L)|_{X_t}$, and that the dimension $m$ of the essential skeleton of $X$ is assumed to be stricty positive (and necessarily $m\leq n$). Denote by $\mu_t$ the Calabi-Yau volume form on $X_t$ normalized to be a probability measure, i.e.
\begin{equation}\label{volform}
\mu_t=\frac{\omega_t^n}{\int_{X_t}\omega_t^n}=\frac{i^{n^2}\Omega_t\wedge\ov{\Omega_t}}{\int_{X_t}i^{n^2}\Omega_t\wedge\ov{\Omega_t}}.
\end{equation}

Let us now recall the asymptotic behavior of the integrals $\int_{X_t}i^{n^2}\Omega_t\wedge\ov{\Omega_t}$, largely following \cite{BJ}.
For any $J\subset I$ we denote by $E_J=\bigcap_{j\in J}E_j$. As in \cite{Li2,Li3}, we fix a K\"ahler metric on $\mathfrak{X}$ and for $\ve>0$ small and $J\subset I$ with $E_J\neq\emptyset$ we define
$$E^0_J=\{x\in X_t\ |\ d(x,E_J)<\ve\}\backslash\{x\in X_t\ |\ d(x,E_{J'})<\ve\text{ for some }J'\supsetneq J\}.$$
For any given $x\in E_J\subset\mathfrak{X}$ let $p=|J|-1$ and pick local coordinates $z_0,\dots,z_n$ on $x\in V\subset\mathfrak{X}$, defined in the unit polydisc, such that $z_0,\dots,z_p$ are defining equations for $E_j, j\in J$, so that in these coordinates we have $t=z_0\cdots z_p$. We shall call these adapted coordinates. We can then write
$$\Omega=f_J \prod_{i=0}^pz_i^{a_i}dz_i\wedge\prod_{j=p+1}^n dz_j,$$
where $f_J$ is a local non-vanishing holomorphic function. Since $dt\wedge\Omega_t=\Omega$ along $X_t$, on $E^0_J$ we get
$$\Omega_t=f_Jz_0^{a_0}\cdots z_p^{a_p}\prod_{j=1}^p\frac{dz_j}{z_j}\wedge\prod_{k=p+1}^n dz_k,$$
$$i^{n^2}\Omega_t\wedge\ov{\Omega_t}=|f_J|^2|z_0|^{2a_0}\cdots |z_p|^{2a_p}\prod_{j=1}^pi\frac{dz_j}{z_j}\wedge \frac{d\ov{z_j}}{\ov{z_j}}\wedge\prod_{k=p+1}^n idz_k\wedge d\ov{z_k},$$
from which using polar coordinates $z_j=\exp(x_j\log|t|+i\theta_j),j\in J,$ one can easily see as in \cite{BJ} that
$$\int_{E^0_J} i^{n^2}\Omega_t\wedge\ov{\Omega_t}\sim |\log|t||^{m_J},$$
where
$$m_J=|\{j\in J\ |\ a_j=0\}|-1,$$
while
$$\int_{X_t} i^{n^2}\Omega_t\wedge\ov{\Omega_t}\sim |\log|t||^{m},$$
where
\begin{equation}\label{defn}
m=\max\{|J|-1\ |\ E_J\neq\emptyset, a_j=0\text{ for all }j\in J\}=\dim \Sk(X).
\end{equation}
The local logarithmic variables $x_j=\frac{\log|z_j|}{\log|t|}$ vary in the standard simplex
$$\Delta_J=\left\{0\leq x_j\leq 1\ |\ \sum_{j=0}^px_j=1\right\},$$
and in this way one obtains a map $\mathrm{Log}_t:V\backslash \bigcup_j E_j\to \Delta_J$, see \cite{BJ}.

For each $i\in I$ we fix now a defining section $\sigma_i\in H^0(\mathfrak{X},\mathcal{O}(E_i))$ and a Hermitian metric $h_i$ on $\mathcal{O}(E_i)$, so that $r_i:=|\sigma_i|^2_{h_i}$ is a smooth nonnegative function of $\mathfrak{X}$ which vanishes precisely along $E_i$ and is uniformly comparable to $|z_i|^2$ in any adapted coordinate chart as above where $z_i$ is the local defining equation of $E_i$. In particular,
$$\ti{x}_i:=\frac{\log r_i}{\log|t|},$$
is now defined on the whole $\mathfrak{X}\backslash \bigcup_j E_j$, and in the adapted coordinates as above it is equal to $2x_i$ up to very small errors (as $t$ approaches $0$). It follows that on $X_t$ (for $|t|$ sufficiently small) in an adapted coordinate chart near a point of $E^0_J$ as above, the point $\left(\frac{1}{2}\ti{x}_0,\dots,\frac{1}{2}\ti{x}_p\right)$ will lie very close to $\Delta_J$.

\section{Diameter lower bound}\label{sectlow}
In this section we prove the diameter lower bound in Theorem \ref{th1}.

The setting is the same as in the previous section, so $X\to\Delta^*$ is a polarized Calabi-Yau degeneration family with $m=\dim\Sk(X)>0$, with a semistable model $\mathfrak{X}\to\Delta$ with $X_0=\sum_{i\in I}E_i$ and $K_{\mathfrak{X}/\Delta}=\sum_{i\in I} a_iE_i$.  We fix also an embedding of the family $\mathfrak{X}\hookrightarrow \mathbb{P}^{N_0}\times\Delta$ and denote by $\mathfrak{L}\to\mathfrak{X}$ the restriction of the hyperplane bundle.

We choose a nonempty $E_J$ which realizes the maximum in \eqref{defn}, with $m=|J|-1$, and relabel so that $J=\{0,\dots,m\}$. We also denote by $U$ an open neighborhood of $E_J$ in $\mathfrak{X}$ which can be covered by finitely many adapted coordinate charts as above. In particular, in these charts we have
\begin{equation}\label{calcul}
i^{n^2}\Omega_t\wedge\ov{\Omega_t}=|f_J|^2\prod_{j=1}^mi\frac{dz_j}{z_j}\wedge \frac{d\ov{z_j}}{\ov{z_j}}\wedge\prod_{k=m+1}^n idz_k\wedge d\ov{z_k}.
\end{equation}

We need the following construction:

\begin{prop}\label{pr1}
We can find a metric $\omega_t'$ on $X_t$ in the class $\frac{1}{|\log |t||}c_1(\mathfrak{L})|_{X_t}$, a Lipschitz function $\rho_t$ on $X_t$, and an open neighborhood $U$ of $E_J$ in $\mathfrak{X}$ as above, with the following properties:
\begin{itemize}
\item[(a)]
The function $\rho_t$ is supported on the closure of $B_t=\{ \rho_t<0 \}\subset U\cap X_t$. On $B_t$ the function $\rho_t$ is comparable to a quadratic function in the logarithmic variables $x_j=\frac{\log |z_j|}{|\log |t|| }$, for $j=1,2,\ldots m$, in adapted coordinate charts, with $\min \rho_t=-1$  and $\max \rho_t= 0$.
\item[(b)]
On $B_t$ in adapted coordinate charts we have
\begin{equation}\label{ref}
\omega_t'\geq C^{-1}  \frac{i}{|\log |t||^2}\sum_{j=1}^m \frac{dz_j}{z_j} \wedge \frac{d\ov{z_j}}{\ov{z_j}},
\end{equation}
and
\begin{equation}\label{grad}
|d\rho_t|^2_{\omega'_t}\leq C,
\end{equation}
for a fixed constant $C$ independent of $t$.
\end{itemize}
\end{prop}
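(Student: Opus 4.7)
The plan is to realize $\omega_t'$ as $\omega_{L,t}+\ddbar\phi_t$ where $\omega_{L,t}=|\log|t||^{-1}\omega_{\mathfrak{L}}|_{X_t}$ is the rescaled Fubini-Study representative of $|\log|t||^{-1}c_1(\mathfrak{L})|_{X_t}$ and $\phi_t$ is a small convex quadratic in the globally defined logarithmic variables $\ti{x}_j = \log r_j/\log|t|$ from Section \ref{sectprel}, and to take $\rho_t$ to be a truncated paraboloid in those same variables centered near the barycenter of $\Delta_J$.

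Concretely, with $J=\{0,\dots,m\}$, set $\phi_t = \ve\sum_{j\in J}\ti{x}_j^2$ for a small $\ve>0$. Since $X_t$ does not meet any $E_j$, each $\ti{x}_j$ is smooth on $X_t$, and by the discussion in Section \ref{sectprel} uniformly bounded, so $\phi_t$ is bounded and $\omega_t'$ lies in the correct K\"ahler class. Expanding
\[
\ddbar\phi_t = 2\ve\sum_{j\in J}\bigl(i\partial\ti{x}_j\wedge\bar\partial\ti{x}_j + \ti{x}_j\cdot\ddbar\ti{x}_j\bigr),
\]
the first piece is nonnegative while $\ddbar\ti{x}_j = -\Theta_j/\log|t|$ (with $\Theta_j$ the curvature of $h_j$) has norm $O(|\log|t||^{-1})$; for $\ve$ small enough and independent of $t$, the background $\omega_{L,t}\gtrsim |\log|t||^{-1}$ dominates this error so $\omega_t'\geq\tfrac{1}{2}\omega_{L,t}>0$ globally. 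The toric lower bound (b) then follows by computing in adapted coordinates: $\partial\ti{x}_j = (\log|t|)^{-1}(\frac{dz_j}{z_j}+\partial v_j)$, and by the elementary inequality $i(\alpha+\gamma)\wedge\overline{\alpha+\gamma}\geq \tfrac{1}{2}i\alpha\wedge\bar\alpha - i\gamma\wedge\bar\gamma$ applied with $\alpha=\tfrac{dz_j}{z_j}$, $\gamma=\partial v_j$, the leading toric term $\ve|\log|t||^{-2}\sum_{j=1}^{m} i\frac{dz_j}{z_j}\wedge\frac{d\ov{z_j}}{\ov{z_j}}$ emerges modulo a correction of order $|\log|t||^{-2}$ which is absorbed where $|z_j|$ is small, as it is on $B_t$.

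For the paraboloid, define $\ti{\rho}_t = R^2\sum_{j=1}^{m}(\ti{x}_j - \tfrac{2}{m+1})^2 - 1$ with $R>0$ a fixed large constant, and $\rho_t = \min(0,\ti{\rho}_t)$. This is Lipschitz, and $B_t=\{\rho_t<0\}$ is the set where $|\ti{x}_j - \tfrac{2}{m+1}|<1/R$ for each $j=1,\dots,m$; the identity $\sum_{j\in J}\ti{x}_j = 2+O(|\log|t||^{-1})$ coming from $t=z_0\cdots z_m$ forces $\ti{x}_0$ also close to $\tfrac{2}{m+1}$, so for $R$ large enough $B_t$ sits inside a prescribed adapted neighborhood $U$ of $E_J$. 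Since $\ti{x}_j\approx 2 x_j$ in such charts, $\rho_t$ is comparable there to a quadratic in $x_1,\dots,x_m$; the extremal value $-1$ is attained at points where $z_j=|t|^{1/(m+1)}e^{i\theta_j}$, which exist on $X_t$, and $\max\rho_t=0$.

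For the gradient bound, the explicit construction of $\omega_t'$ supplies more than just (b): the global inequality $\omega_t'\geq\tfrac{1}{2}\omega_{L,t}$ controls the directions $z_{m+1},\dots,z_n$ transverse to the toric subspace, and together with (b) this bounds the inverse metric blockwise (roughly $g^{j\bar j}\leq C|\log|t||^2|z_j|^2$ for $j=1,\dots,m$ and $g^{\alpha\bar\alpha}\leq C|\log|t||$ for $\alpha\geq m+1$), giving $|d\ti{x}_j|^2_{\omega_t'}\leq C$ on $B_t$ from the dominant contribution $|\log|t||^{-1}|z_j|^{-1}$ of $\partial\ti{x}_j$. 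Cauchy-Schwarz combined with $|\ti{x}_j - \tfrac{2}{m+1}|\leq 1/R$ on $B_t$ then yields (\ref{grad}). The main technical obstacle is exactly this blockwise inversion of the Hermitian matrix $g_{\alpha\bar\beta}$: the off-diagonal cross-terms coming from the $\partial v_j$ corrections in the Hessian of $\phi_t$ need to be handled carefully, but they are of lower order in $1/|z_j|$ and are absorbed by the dominant diagonal terms.
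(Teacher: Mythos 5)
Your construction of $\omega_t'$ is correct but genuinely different from the paper's. The paper pulls back an honest toric metric from projective space: it picks sections $s_0,\dots,s_N$ of $\mathfrak{L}^k$ with $s_j/s_0$ comparable to $z_j$ for $1\leq j\leq m$, sets $\omega'_{x,t}=\frac{1}{k}\ddbar u\bigl(\log|s_1/s_0|/|\log|t||,\dots\bigr)$ for a smooth convex $u$ asymptotic to $\max(0,x_1,\dots,x_N)$ with $D^2u\geq C^{-1}\mathrm{Id}$ on the relevant ball, and then averages finitely many such metrics over a cover of $E_J$. Your route instead perturbs the Fubini--Study representative by $\ve\,\ddbar\sum_{j\in J}\ti{x}_j^2$ built from the \emph{global} logarithmic variables of Section \ref{sectprel}; since $\ti{x}_j$ is uniformly bounded on $X_t$ and $\ddbar\ti{x}_j$ is $O(|\log|t||^{-1})$ against a fixed metric on $\mathfrak{X}$, the indefinite part is absorbed by $\omega_{L,t}\geq C^{-1}|\log|t||^{-1}\omega_{\mathfrak{X}}|_{X_t}$, while the rank-one terms $i\partial\ti{x}_j\wedge\bar\partial\ti{x}_j$ produce \eqref{ref} after absorbing the $O(|\log|t||^{-2})$ error from $\partial v_j$ into $\omega_{L,t}$. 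This is arguably cleaner: it dispenses with the auxiliary sections of $\mathfrak{L}^k$ and the covering/averaging step, and it gives the transverse bound $\omega_t'\geq\frac12\omega_{L,t}$ for free (which the paper only arranges in Section \ref{sectup} by adding $\omega_{{\rm FS},t}$ by hand in \eqref{lb}). Your $\rho_t$ and the deduction of \eqref{grad} by dualizing \eqref{ref} are essentially the paper's construction (the paper centers the paraboloid at a general interior point $b\in\Delta_J$ rather than the barycenter, which is immaterial, and likewise normalizes $\min\rho_t=-1$ only up to the same $O(|\log|t||^{-1})$ slack you have, since $r_j$ is merely comparable to $|z_j|^2$).

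The one step that does not hold up as written is the containment $B_t\subset U\cap X_t$. You deduce that $\ti{x}_0$ is close to $\frac{2}{m+1}$ from ``$\sum_{j\in J}\ti{x}_j=2+O(|\log|t||^{-1})$ coming from $t=z_0\cdots z_m$'', but the relation $t=z_0\cdots z_m$ holds only in an adapted chart near $E_J$ --- precisely where you are trying to prove the point lies, so the argument is circular. The identity valid globally on $X_t$ is $\sum_{i\in I}\ti{x}_i=2+O(|\log|t||^{-1})$, summed over \emph{all} components of $X_0$ (from $\prod_i\sigma_i\sim t$). Combined with $\ti{x}_j\approx\frac{2}{m+1}$ for $j=1,\dots,m$ this only bounds $\ti{x}_0$ from above: the missing mass could a priori sit on some other component $E_{i_0}$ meeting $E_{\{1,\dots,m\}}$, so the point could lie near $E_{\{1,\dots,m,i_0\}}$ with $\ti{x}_0\approx 0$, i.e.\ far from $E_0$ and outside $U$. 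The fix is cheap: include the $j=0$ term and take $\ti\rho_t=R^2\sum_{j=0}^{m}(\ti{x}_j-\frac{2}{m+1})^2-1$, so that $\ti\rho_t<0$ forces every $\ti{x}_j$, $j\in J$, to be bounded below by a positive constant, hence forces proximity to every $E_j$ with $j\in J$ and therefore to $E_J$; in an adapted chart this is still a quadratic in $x_1,\dots,x_m$ (as $x_0=1-\sum_{j\geq1}x_j$ there), and $d\ti{x}_0=-\sum_{j=1}^m d\ti{x}_j+O(|\log|t||^{-1})$ is still controlled by \eqref{ref}, so (a) and \eqref{grad} survive unchanged.
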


For ease of notation, in the rest of the paper we will denote by $C>0$ a generic uniform constant, independent of $t$, which may vary from line to line.

\begin{proof}
Given any point $x\in E_J$ we can find $k\gg 1$ and sections $s_0,\dots,s_N\in H^0(\mathfrak{X},\mathfrak{L}^k)$ (for some $N=N(k)$) so that in some adapted coordinate chart $V_x$ near $x$ we have that none of the sections $s_0, s_{m+1},\dots,s_N$ vanishes, while $s_j=0$ is a defining equation for $E_j$, $1\leq j\leq m$, and so $s_j/s_0$ is comparable to $z_j$ for $1\leq j\leq m$.

We construct a K\"ahler metric $\omega_{x,t}'$ on $X_t$ by pulling back a suitable toric metric on $\mathbb{P}^{N_0}$ , which on the complement of the zeros of all the $s_i$'s is given by
$$\omega'_{x,t}=\frac{1}{k}\ddbar u\left(\frac{\log|s_1/s_0|}{|\log|t||},\dots,\frac{\log|s_N/s_0|}{|\log|t||}\right),$$
where $u(x_1,\dots,x_N)$ is a smooth convex function in $\mathbb{R}^N$ which is asymptotic to $v(x_1,\dots,x_N)=\max(0,x_1,\dots,x_N)$ at infinity, and with $D^2 u \geq C^{-1}\mathrm{Id}$ on a ball of radius comparable to 1 containing the image of $V_x\cap X_t$ in the logarithmic coordinates. For example, an explicit such $u$ can be produced as the convolution of $v$ with a smooth mollifier. By construction, $\omega'_{x,t}$ lies in the class $\frac{1}{|\log |t||}c_1(\mathfrak{L})|_{X_t}$, and it satisfies \eqref{ref} on $V_x\cap X_t$.

We then choose finitely many $x^{(1)},\dots,x^{(M)}\in E_J$ such that the corresponding $V_{x^{(1)}},\dots,V_{x^{(M)}}$ cover $E_J$, let $U$ be their union, and define
$$\omega'_t=\frac{1}{M}\sum_{k=1}^M\omega'_{x^{(k)},t}.$$
This is our desired K\"ahler metric on $X_t$ in $\frac{1}{|\log |t||}c_1(\mathfrak{L})|_{X_t}$ which satisfies \eqref{ref} in adapted coordinate charts on $U\cap X_t$.

Next, we consider the function
$$\hat{\rho}=A\sum_{j=1}^m \left(\frac{\ti{x}_j}{2}-b_j\right)^2-1=A\sum_{j=1}^m\left(\frac{\log r_j}{2\log|t|}-b_j\right)^2-1,$$
on $U\backslash \bigcup_{i\in I} E_i$.
Choosing the constants $b_j$ in the strict interior of $\Delta_J$ we can ensure the minimum of $\hat{\rho}$ on $U\cap X_t$ equals $-1$, and choosing
 $A$ suitably large independent of $t$, we can ensure $\{ \hat{\rho}\leq 0  \}$ is compactly contained in $U$. Now we define
$$\rho_t=\begin{cases}
\min(\hat{\rho},0)|_{X_t}\quad &\text{ on }U\cap X_t\\
0\quad &\text{ on }X_t\backslash U
\end{cases},$$
which satisfies the requirements in (a). We let $B_t=\{\rho_t<0\}\subset X_t$. Lastly, \eqref{grad} follows immediately from part (a) and \eqref{ref}.
\end{proof}

We can now give the proof of the diameter lower bound in Theorem \ref{th1}:
\begin{proof}[Proof of the diameter lower bound in Theorem \ref{th1}]
Thanks to Proposition \ref{pr1}, on $B_t\subset X_t$ we have
$$|d\rho_t|_{\omega_t'}^2\leq C,$$
for some constant $C$ independent of $t$. We then use this together with the elementary inequality $|d\rho_t|_{\omega_t}^2\leq |d\rho_t|_{\omega_t'}^2 \Tr_{\omega_t}\omega_t'$ to get
\[
\left( \int_{X_t} |d\rho_t|_{\omega_t} d\mu_t\right)^2=\left(\int_{B_t} |d\rho_t|_{\omega_t} d\mu_t\right)^2\leq
\int_{B_t} |d\rho_t|_{\omega_t}^2 d\mu_t\leq C\int_{X_t} \Tr_{\omega_t} \omega_t'd\mu_t,
\]
while from \eqref{volform} we get
$$\int_{X_t} \Tr_{\omega_t} \omega_t'd\mu_t=\frac{n\int_{X_t}\omega'_t\wedge\omega_t^{n-1}}{\int_{X_t}\omega_t^n}=\frac{n\int_{X_t}c_1(\mathfrak{L})\cdot
c_1(L)^{n-1}}{\int_{X_t}c_1(L)^n}\leq C,$$
and so
\begin{equation}\label{part1}
 \int_{X_t} |d\rho_t|_{\omega_t} d\mu_t\leq C.
\end{equation}
Define two subsets of $X_t$ by $A_1= \{ \rho_t< -\frac{2}{3}  \}$ and $A_2=\{  -\frac{1}{3}\leq \rho_t\leq 0  \}$. Given two points $x\in A_1, y\in A_2$ which are connected by a unique minimal geodesic $\gamma_{x,y}$ (w.r.t. $\omega_t$), we can bound
\begin{equation}\label{part2}
\rho_t(y)-\rho_t(x)\leq \int_{\gamma_{x,y}} |d\rho_t|_{\omega_t} ds,
\end{equation}
where $\gamma_{x,y}$ is parametrized with respect to $\omega_t$-arclength.

Combining \eqref{part2} with Cheeger-Colding's segment inequality \cite[Theorem 2.11]{CC} applied to the function $|d\rho_t|_{\omega_t}$ we obtain
\begin{equation}\label{part3}
\begin{split}
D_t( \mu_t(A_1)+ \mu_t(A_2) ) \int_{X_t} |d\rho_t|_{\omega_t} d\mu_t&\geq C^{-1}\int_{A_1\times A_2} \left(\int_{\gamma_{x,y}} |d\rho_t|_{\omega_t}ds\right)d\mu_xd\mu_y\\
&\geq C^{-1}\int_{A_1\times A_2} (\rho_t(y)-\rho_t(x)) d\mu_xd\mu_y\\
&\geq \frac{C^{-1}}{3}\mu_t(A_1)\mu_t(A_2),
\end{split}
\end{equation}
where $D_t=\mathrm{diam}(X_t,\omega_t)$, and in the $\int_{A_1\times A_2}$ we are actually only integrating over the subset of pairs $(x,y)$ which are joined by a unique $\omega_t$-minimal geodesic, which has full measure (cf. \cite{CC}).

Combining \eqref{part1} and \eqref{part3} gives
$$\mu_t(A_1)\mu_t(A_2)\leq C D_t( \mu_t(A_1)+ \mu_t(A_2) )\leq C D_t.$$
Lastly, from the definition of $\rho_t$ and from \eqref{calcul}, a direct computation in polar coordinates (analogous to the one in \cite{BJ}) gives
\[
\mu_t(A_1) \geq C^{-1},  \quad \mu_t(A_2) \geq C^{-1},
\]
for a fixed constant $C$, and so $D_t\geq C^{-1},$ as desired.

\end{proof}

\section{Diameter upper bound}\label{sectup}

Here we prove the diameter upper bound in Theorem \ref{th1}.

\begin{proof}[Proof of the diameter upper bound in Theorem \ref{th1}]
Let $\omega'_t$ be the K\"ahler metric defined in Proposition \ref{pr1}, and let $\omega_{{\rm FS},t}=\frac{1}{|\log|t||}\omega_{\rm FS}|_{X_t},$ where $\omega_{\rm FS}$ is a suitable Fubini-Study metric on $\mathbb{P}^{N_0}$ scaled so that $\omega_{\rm FS}|_{X_t}\in c_1(\mathfrak{L})|_{X_t}$. Then the metrics $\omega'_t$ and $\omega_{{\rm FS},t}$ are cohomologous, and on $B_t\subset X_t$ in any adapted coordinate chart we have
\begin{equation}\label{lb}
\omega'_t+\omega_{{\rm FS},t}\geq  C^{-1}  \frac{i}{|\log |t||^2}\sum_{j=1}^m \frac{dz_j}{z_j}\wedge \frac{d\ov{z_j}}{\ov{z_j}}
+C^{-1}  \frac{i}{|\log |t||}\sum_{j=m+1}^n dz_j \wedge d\overline{z_j}.
\end{equation}
Let us then fix a point $x\in E^0_J$, an adapted coordinate chart near $x$, and in these coordinates define a local K\"ahler metric $\ti{\omega}_t$ on $X_t$ by the RHS of \eqref{lb}. Inside this coordinate chart intersected $X_t$, define also $\ti{B}_t$ to be a Euclidean rectangle which is contained inside $B_t$ so that in the metric $\ti{\omega}_t$, in the first $m$ complex directions $\ti{B}_t$ has length $\sim 1$ in the radial directions and length $\sim|\log|t||^{-1}$ in the logarithmic angular directions, and in the other $n-m$ complex directions $\ti{B}_t$ has length $\sim |\log|t||^{-\frac{1}{2}}$. Therefore, given any $x,y\in \ti{B}_t$, if we denote by $\gamma_{x,y}$ the Euclidean straight line in $\ti{B}_t$ joining them, parametrized linearly by $0\leq s\leq 1$, then
$|\dot{\gamma}_{x,y}(s)|_{\ti{\omega}_t}\leq C$ for a uniform constant $C$ independent of $t$ and $s$.

On $\ti{B}_t$ we have
$$\frac{C^{-1}}{|\log|t||^n}\mu_t\leq \ti{\omega}_t^n\leq \frac{C}{|\log|t||^n}\mu_t,$$
and again by direct computation in polar coordinates (as in \cite{BJ}), thanks to the definition of $\ti{B}_t$ and to \eqref{calcul} we have that
\begin{equation}\label{totalint}
C^{-1}\leq \int_{\ti{B}_t}d\mu_t\leq 1,
\end{equation}
and so
$$\frac{C^{-1}}{|\log|t||^n}\leq \int_{\ti{B}_t}\ti{\omega}_t^n\leq \frac{C}{|\log|t||^n}.$$
If we then define $$\ti{\mu}_t=\frac{\ti{\omega}^n_t}{\int_{\ti{B}_t}\ti{\omega}_t^n},$$
then $\ti{\mu}_t$ is uniformly comparable to $\mu_t$ on $\ti{B}_t$ and
\begin{equation}\label{totalint2}
\mu_t(\ti{B}_t)\geq C^{-1}\mu_t(B_t)\geq C^{-1}.
\end{equation}

Thanks to \eqref{lb} we have
\begin{equation}\label{ub}
\begin{split}
\int_{\ti{B}_t}\Tr_{\ti{\omega}_t} \omega_t d\ti{\mu}_t&=\frac{n\int_{\ti{B}_t}\omega_t\wedge\ti{\omega}_t^{n-1}}{\int_{\ti{B}_t}\ti{\omega}^n_t}\leq C|\log|t||^n\int_{X_t}
\omega_t\wedge(\omega'_t+\omega_{{\rm FS},t})^{n-1}\\
&\leq C\int_{X_t}c_1(L)\cdot c_1(\mathfrak{L})^{n-1}\leq C.
\end{split}\end{equation}
We wish to use this to prove that
\begin{equation}\label{l1}
\int_{\ti{B}_t\times \ti{B}_t} \text{dist}_{\omega_t} (x,y) d\ti{\mu}_t(x)d\ti{\mu}_t(y) \leq C.
\end{equation}
Indeed, since $|\dot{\gamma}_{x,y}(s)|_{\ti{\omega}_t}\leq C$, we can estimate
$$\mathrm{dist}_{\omega_t}(x,y)\leq C\int_{0}^1(\Tr_{\ti{\omega}_t}{\omega_t}(sy+(1-s)x))^\frac{1}{2}ds,$$
and
\[\int_{\ti{B}_t\times \ti{B}_t}\text{dist}_{\omega_t} (x,y) d\ti{\mu}_t(x)d\ti{\mu}_t(y)\leq C\int_{\ti{B}_t\times \ti{B}_t}\int_{0}^1(\Tr_{\ti{\omega}_t}{\omega_t}(sy+(1-s)x))^\frac{1}{2}ds\, d\ti{\mu}_t(x)d\ti{\mu}_t(y).\]
We can then argue as in \cite[Lemma 1.3]{DPS}, using Fubini
\[\begin{split}
&\int_{\ti{B}_t\times \ti{B}_t}\int_{0}^1(\Tr_{\ti{\omega}_t}{\omega_t}(sy+(1-s)x))^\frac{1}{2}ds\, d\ti{\mu}_t(x)d\ti{\mu}_t(y)\\
&=\int_{0}^1\int_{\ti{B}_t\times \ti{B}_t}(\Tr_{\ti{\omega}_t}{\omega_t}(sy+(1-s)x))^\frac{1}{2}d\ti{\mu}_t(x)d\ti{\mu}_t(y)ds\\
&=\int_0^{\frac{1}{2}}\int_{\ti{B}_t\times \ti{B}_t}(\Tr_{\ti{\omega}_t}{\omega_t}(sy+(1-s)x))^\frac{1}{2}d\ti{\mu}_t(x)d\ti{\mu}_t(y)ds\\
&+\int_{\frac{1}{2}}^1\int_{\ti{B}_t\times \ti{B}_t}(\Tr_{\ti{\omega}_t}{\omega_t}(sy+(1-s)x))^\frac{1}{2}d\ti{\mu}_t(y)d\ti{\mu}_t(x)ds
\end{split}\]
and in the two innermost integrals we change variable from $x$ (resp. $y$) to $z=sy+(1-s)x$ with $0\leq s\leq \frac{1}{2}$ (resp. $\frac{1}{2}\leq s\leq 1$), noting that $\ti{\mu}_t(x)\leq C\ti{\mu}_t(z)$ (resp. $\ti{\mu}_t(y)\leq C\ti{\mu}_t(z)$). Thus both of these innermost integrals can be bounded above by
\[C\int_{\ti{B}_t}(\Tr_{\ti{\omega}_t}{\omega_t}(z))^\frac{1}{2}d\ti{\mu}_t(z)\leq C\left(\int_{\ti{B}_t}\Tr_{\ti{\omega}_t}{\omega_t}(z)d\ti{\mu}_t(z)\right)^{\frac{1}{2}}\leq C,\]
by \eqref{ub}, and \eqref{l1} follows.

But \eqref{l1} is equivalent to
\[
\int_{\ti{B}_t\times \ti{B}_t} \text{dist}_{\omega_t} (x,x') d\mu_t(x)d\mu_t(x') \leq C,
\]
hence for some $x'\in \ti{B}_t$ we have
\[
\int_{\ti{B}_t} \text{dist}_{\omega_t} (x,x') d\mu_t(x) \leq C,
\]
which gives a weak $L^1$-estimate
\[\mu_t\{x\in \ti{B}_t\ |\ \text{dist}_{\omega_t} (x,x')\geq r\}\leq\frac{C}{r}.\]
Taking $r\gg 1$ independent of $t$,  we can ensure that
\[\begin{split}
\mu_t(B_{\omega_t}(x',r))&\geq \mu_t(\ti{B}_t\cap B_{\omega_t}(x',r))\\
&=\mu_t(\ti{B}_t)-\mu_t\{x\in \ti{B}_t\ |\ \text{dist}_{\omega_t} (x,x')\geq r\}\\
&\geq C^{-1}-\frac{C}{r}\\
&\geq C^{-1}=C^{-1}\mu_t(X_t),
\end{split}\]
using here \eqref{totalint2}.
The Bishop-Gromov volume comparison theorem then gives us that $\mu_t(B_{\omega_t}(x',1))\geq C^{-1}\mu_t(X_t),$ or equivalently
\[\frac{\int_{X_t}\omega_t^n}{\int_{B_{\omega_t}(x',1)}\omega_t^n}\leq C.\]
This bound can then be inserted into a well-known result of Yau (see e.g. \cite[Lemma 3.2]{To0}): given a complete $d$-dimensional Riemannian manifold $(M,g)$ with nonnegative Ricci curvature, for any $x\in M$ and $1<R<\mathrm{diam}(M,g)$ we have
\[\frac{R-1}{2d}\leq \frac{\mathrm{Vol}(B_g(x,2R+2))}{\mathrm{Vol}(B_g(x,1))}.\]
Indeed, it suffices to choose $R=\mathrm{diam}(X_t,\omega_t)-1$ to obtain the desired uniform diameter upper bound for $(X_t,\omega_t)$.
\end{proof}

\end{document}